\author{Elif Uyan\i k}
\address{Elif Uyan\i k, Department of Mathematics \\ Middle East Technical University \\ 06800 Ankara Turkey}
\email{euyanik@metu.edu.tr}
\author{Murat H. Yurdakul}
\address{Murat H. Yurdakul, Department of Mathematics \\ Middle East Technical University \\ 06800 Ankara Turkey}
\email{myur@metu.edu.tr}
\title[Cauchy product operator]{A note on triangular operators on Smooth Sequence Spaces}
\subjclass[2010]{47B37, 46A45}
\keywords{K\"{o}the spaces, Smooth Sequence Spaces, Cauchy Product}
\thanks{This research was partially supported by the Turkish Scientific and Technological Research Council}
\numberwithin{equation}{section}
\theoremstyle{thmit} 
\newtheorem{theorem}{Theorem}[section]
\newtheorem{remark}{Remark}[section]
\newtheorem{lemma}[theorem]{Lemma}
\begin{document}

\maketitle

\begin{abstract}
	For a scalar sequence ${(\theta_n)}_{n \in \mathbb{N}}$, let $C$ be the matrix defined by $c_n^k = \theta_{n-k+1}$ if $n \geq k$, $c_n^k = 0$ if $n < k.$ The map between K\"{o}the spaces $\lambda(A)$ and $\lambda(B)$ is called a Cauchy Product map if it is determined by the triangular matrix $C$. In this note we introduced some necessary and sufficient conditions for a Cauchy Product map on a nuclear K\"{o}the space $\lambda(A)$ to nuclear $G_1$-space $\lambda(B)$ to be linear and continuous. Its transpose is also considered. 
\end{abstract}
	
\dedicatory{Dedicated to the memory of Prof. Dr. Tosun Terzio{\~g}lu}

\section{Introduction}
We refer the reader to \cite{Mei97}, \cite{Pie72} and \cite{Ram79} for the terminology used but not defined here. Let $A=(a_n^k)_{n,k\in\mathbb{N}}$ be a matrix of real numbers such that $0\leq a_n^k \leq a_n^{k+1}$ for all $n,k$ and $\displaystyle \sup_{k} a_{n}^{k} > 0$. The $\ell^{1}$ - K\"{o}the space $\lambda(A)$ defined by the matrix $A$ is the space of all sequences of scalars $x = (x_n)$ such that
$$
\|x\|_k = \sum_n |x_n| a_n^k < \infty, \quad \forall k \in \mathbb{N}\
$$ 
With the topology generated by the system of seminorms $\left\{\left\|.\right\|_k, k \in \mathbb{N}\right\}$, it is a Fr{\'e}chet space.

    The topological dual of $\lambda(A)$ is isomorphic to the space of all sequences $u$ for which $\left|u_{n}\right| \leq C a_{n}^{k}$ for some $k$ and $C > 0.$
		
    It is well known that a K\"{o}the space $\lambda(A)$ associated with the matrix $A$ is nuclear if and only if for each $k$ there exists $m$ such that
$$
\sum_n \frac{a_n^k}{a_n^m} < +\infty
$$
and in this case the fundamental system of norms $\displaystyle \|x\|_k = \sum_n |x_n| a_n^k$ can be replaced by the equivalent system of norms
$$
\|x\|_k =\sup_n |x_n| a_n^k , \quad k \in \mathbb{N}.
$$
    
		The infinite and finite type power series spaces are well known examples of K\"{o}the spaces given by the matrices $(e^{k\alpha_{n}})$ respectively $(e^{-\frac{\alpha_{n}}{k}})$ where $(\alpha_{n})$ is a monotonically increasing sequence going to infinity. The space $A(\mathbb{C})$ of all entire functions on $\mathbb{C}$ and the space $A(\mathbb{D})$ of all holomorphic functions on the unit disc can be represented as an infinite respectively finite type power series spaces.
		
		Smooth sequence spaces were introduced in \cite{Ter69} as a generalization of power series spaces. A K\"{o}the set $A = \left\{(a_n^k)\right\}$ is called a $G_\infty$-set and the corresponding K\"{o}the space $\lambda(A)$ a $G_\infty$-space if $A$ satisfies the followings :
		
   (1) $a_n^1 =1$, $a_n^k \leq a_{n+1}^k$ for each k and n;
	
   (2) $\forall k$ $\exists j$ with $(a_n^k)^2 = O(a_n^j)$

A K\"{o}the set $B = \left\{(b_n^k)\right\}$ is called a $G_1$-set and the corresponding K\"{o}the space $\lambda(B)$ a $G_1$-space if $B$ satisfies the followings :

   (1) $0<b_{n+1}^k \leq b_n^k < 1$ for each $k$ and $n$;
	
   (2) $\forall k$ $\exists j$ with $b_n^k = O((b_n^j)^2)$ 
    
    We need the following result \cite{Cro75}.

\begin{lemma}\label{lem1}
Let $\lambda(A)$ and $\lambda(B)$ be K\"{o}the spaces. A map $T:\lambda(A) \longrightarrow \lambda(B)$ is continuous linear map if and only if for each k there exists m such that
$$
\sup_n \frac{{\|T{e_n} \|}_k}{{\|e_n\|}_m} < + \infty\\
$$
\end{lemma}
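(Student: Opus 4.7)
The plan is to prove both directions by exploiting the identity $\|e_n\|_m = a_n^m$ and its analogue in $\lambda(B)$, together with the density of the span of the unit vectors and the absolute convergence of the canonical expansion $x = \sum_n x_n e_n$ in any K\"{o}the space.

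The necessity direction is routine: continuity of a linear map $T$ between Fr\'echet spaces yields, for each $k$, an index $m$ and a constant $C > 0$ with $\|Tx\|_k \leq C\, \|x\|_m$ for all $x \in \lambda(A)$. Testing this inequality on $x = e_n$ gives $\|Te_n\|_k \leq C\, \|e_n\|_m$, so the supremum in the statement is bounded by $C$.

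For the sufficiency direction I would proceed in three steps. First, rewrite the hypothesis as $\|Te_n\|_k \leq C\, \|e_n\|_m$ for a constant $C$ depending on $k$, and verify the desired estimate $\|Tx\|_k \leq C\, \|x\|_m$ for finitely supported $x$ using the triangle inequality. Second, for arbitrary $x \in \lambda(A)$, use that the partial sums $x^{(N)} = \sum_{n \leq N} x_n e_n$ converge to $x$ in $\lambda(A)$ and that, by the uniform bound from the first step, the sequence $\{T x^{(N)}\}$ is Cauchy, hence convergent, in the Fr\'echet space $\lambda(B)$. Third, identify $\lim_N T x^{(N)}$ with $Tx$ and pass to the limit in the seminorm inequality.

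The delicate point is the identification in the third step, since linearity of $T$ alone does not immediately give sequential continuity on infinite expansions. The way I would resolve it is by working coordinatewise: the coordinate functionals on $\lambda(B)$ are continuous, so the coordinates of $\lim_N T x^{(N)}$ are given by the absolutely convergent series $\sum_n x_n (Te_n)_j$, and on the other hand this is precisely the $j$-th coordinate forced on $Tx$ by the continuity-style representation $Tx = \sum_n x_n T e_n$ that any reasonable linear operator on a K\"{o}the space satisfies. Once the two are matched, passing to the limit in $\|T x^{(N)}\|_k \leq C\, \|x^{(N)}\|_m \leq C\, \|x\|_m$ yields continuity of $T$ and closes the argument.
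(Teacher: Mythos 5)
The paper does not actually prove this lemma; it is quoted from Crone and Robinson \cite{Cro75}, where it is a statement about \emph{matrix} maps between K\"othe spaces. Your necessity direction is fine, and your overall plan (bound on finitely supported vectors, then pass to the limit along the partial sums $x^{(N)}$) is the standard route. But the ``delicate point'' you flag in your third step is a genuine gap, and your resolution of it is circular: you justify the identification $\lim_N Tx^{(N)} = Tx$ by appealing to the representation $Tx = \sum_n x_n Te_n$, which is exactly what needs to be proved and which does \emph{not} follow from linearity alone. Indeed, for an arbitrary linear map the sufficiency direction is false: take a Hamel basis of $\lambda(A)$ containing all the $e_n$, let $\phi$ be the linear functional equal to $1$ on some other basis vector and $0$ on every $e_n$, and set $Tx = \phi(x)\,y_0$ for a fixed $y_0 \in \lambda(B)$. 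Then $Te_n = 0$ for all $n$, so the hypothesis holds trivially, yet $T$ is discontinuous (it vanishes on a dense subspace without being zero). No argument from linearity plus the stated hypothesis can close your third step.

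The correct reading, consistent with \cite{Cro75} and with how the lemma is used in this paper (the Cauchy product map is given explicitly by the triangular matrix $C$), is that $T$ is a matrix map: $(Tx)_j = \sum_n t_j^n x_n$ by definition. Under that hypothesis your argument does close, and in fact becomes shorter than you make it: with the $\ell^1$-norms $\|y\|_k = \sum_j |y_j| b_j^k$ one has directly
$$
\|Tx\|_k \;=\; \sum_j \Bigl| \sum_n t_j^n x_n \Bigr| b_j^k \;\leq\; \sum_n |x_n| \sum_j |t_j^n| b_j^k \;=\; \sum_n |x_n|\, \|Te_n\|_k \;\leq\; C \sum_n |x_n|\, a_n^m \;=\; C\,\|x\|_m,
$$
which simultaneously shows that $Tx$ is well defined in $\lambda(B)$ and that $T$ is continuous, with no limiting argument needed. (If you instead work with the sup-norms used later in the paper, you need nuclearity to pass from $\sum_n |x_n| a_n^m$ back to $\sup_n |x_n| a_n^{m'}$; your write-up does not say which norm system you are using.) So: add the matrix-map hypothesis explicitly, and either replace the three-step limiting argument by the displayed estimate or, if you keep it, derive the coordinatewise identity from the definition of a matrix map rather than from an unproved ``continuity-style representation.''
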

If $(a_n)$, $(b_n)$ are two sequences of scalars, then the Cauchy product $(c_n) = (a_n) \ast (b_n)$ of $(a_n)$ and $(b_n)$ is defined by $ \displaystyle{c_n = \sum_{k=1}^n a_{n+1-k} b_k }$.

    Now let $\theta = (\theta_n)$ be a fixed sequence of scalars and let $\lambda(A)$, $\lambda(B)$ be two nuclear $\ell^{1}$-K\"{o}the spaces. We define the Cauchy Product mapping $T_\theta$ from $\lambda(A)$ into $\lambda(B)$ by $T_\theta x = \theta \ast x, \quad x = (x_n) \in \lambda(A).$ So, $T_\theta : \lambda(A) \longrightarrow \lambda(B)$ can be determined by the lower triangular matrix
$$
C =
\begin{pmatrix}
\theta_1 & 0 & 0 & 0 & \cdots \\
\theta_2 & \theta_1 & 0 & 0 & \cdots \\
\theta_3 & \theta_2 & \theta_1 & 0 & \cdots \\
\vdots &  &   & \ddots \\
\end{pmatrix}
$$
\section{Cauchy Product Map on K\"{o}the spaces}
In this section we introduce some necessary and sufficient conditions for the map $T_\theta$ to be linear and continuous. 
\begin{theorem}
Let $\lambda(A)$ be a nuclear K\"{o}the space, $\lambda(B)$ be a nuclear  $G_1$-space. Then the Cauchy product map $T_\theta : \lambda(A) \longrightarrow \lambda(B)$ is linear continuous operator if and only if the following hold:

i) $\theta \in \lambda(B)$

ii) $\lambda(A) \subset \lambda(B)$
\end{theorem}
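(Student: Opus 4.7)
Both directions run through Lemma \ref{lem1}, so I begin by computing $T_\theta e_n$ and its seminorm. Since $T_\theta e_n = (0,\dots,0,\theta_1,\theta_2,\dots)$ with $\theta_1$ placed in position $n$, and since nuclearity lets us replace each $\lambda(B)$-seminorm by the equivalent sup-version $\|y\|_k=\sup_n|y_n|b_n^k$, one gets $\|T_\theta e_n\|_k = \sup_{i\ge 1}|\theta_i|\, b_{i+n-1}^{k}$, while $\|e_n\|_m = a_n^m$. Lemma \ref{lem1} then turns the continuity of $T_\theta$ into the criterion
\[
\forall\,k\ \exists\,m:\qquad \sup_{n,i}\ \frac{|\theta_i|\, b_{i+n-1}^{k}}{a_n^m}<+\infty.
\]

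For sufficiency I would fix $k$, apply the $G_1$-axiom (2) to pick $j$ with $b_n^k\le C\,(b_n^j)^2$, and exploit that $(b_\ell^j)_\ell$ is decreasing to split
\[
(b_{i+n-1}^{j})^{2}=b_{i+n-1}^{j}\,b_{i+n-1}^{j}\le b_i^{j}\,b_n^{j}.
\]
Combined with hypothesis (i) this gives $\sup_i |\theta_i|\, b_{i+n-1}^{k}\le C\,\|\theta\|_{B,j}\,b_n^{j}$. Hypothesis (ii), together with Lemma \ref{lem1} applied to the inclusion $\lambda(A)\hookrightarrow\lambda(B)$, provides $m$ and $C'>0$ with $b_n^{j}\le C'\,a_n^m$ for every $n$, and substituting verifies the criterion above.

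For necessity, (i) follows immediately from $e_1\in\lambda(A)$, $T_\theta e_1=\theta$, and continuity. For (ii) I would specialise the continuity criterion to a single index $i=i_0$, where $i_0$ is the smallest integer with $\theta_{i_0}\neq 0$, obtaining $b_{n+i_0-1}^{k}\le \tilde C\,a_n^m$ for every $n$. In the direct case $i_0=1$ this is already the inclusion (ii). The case $i_0>1$ is the main technical obstacle, since one then has only a shifted inclusion; I plan to convert it to the genuine one by reindexing $n\mapsto n-i_0+1$ and invoking the $G_1$-axiom (2) on $\lambda(B)$ to absorb the fixed shift in the lower index of $b$, using also the already-established fact $\theta\in\lambda(B)$.
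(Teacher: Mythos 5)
Your proposal follows essentially the same route as the paper: the same computation $\|T_\theta e_n\|_k=\sup_{j\ge n}|\theta_{j-n+1}|b_j^k$ (your index $i=j-n+1$), the same reduction via Lemma \ref{lem1}, the same necessity argument (evaluate at $T_\theta e_1$ for (i), specialise to the diagonal term for (ii)), and the same sufficiency argument (split the $G_1$ square $(b_{i+n-1}^j)^2\le b_i^j\,b_n^j$ by monotonicity in the lower index, then absorb one factor with $\theta\in\lambda(B)$ and the other with the inclusion). The one place you go beyond the paper is in flagging the degenerate case $\theta_1=0$: the paper silently divides by $|\theta_1|$ at the step ``choose $j=n$,'' so it implicitly assumes $\theta_1\neq 0$ (indeed the theorem as stated fails for $\theta=0$). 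Your sketched repair for $i_0>1$ is not actually carried out, and it is doubtful it can be: the shifted inequality $b_{n+i_0-1}^k\le \tilde C a_n^m$ cannot in general be converted into $b_n^k\le C' a_n^{m'}$, since the reindexing $n\mapsto n-i_0+1$ produces $a_{n-i_0+1}^m$ and the matrix $A$ is only a nuclear K\"othe matrix with no monotonicity in $n$, while the $G_1$ axiom on $B$ controls powers of $b_n^j$, not shifts of the index $n$. So either state the theorem with the hypothesis $\theta_1\neq 0$ (matching what the paper actually proves) or expect that case to require a genuinely new idea rather than the reindexing you describe.
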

\begin{proof}
Let $T_\theta : \lambda(A) \longrightarrow \lambda(B)$ be a continuous linear operator.\\
Note that ${\|T_\theta e_n \|}_k = {\|(0,0,...,0,\theta_1,\theta_2,\cdots)\|}_k = \displaystyle{\sup_{j \geq n} |\theta_{j-n+1}| b_j^k} \quad$ for $n\in \mathbb{N}.$
Clearly ${\|e_n\|}_m = a_n^m.$
So, by Lemma \ref{lem1} $\forall k$, $\exists m$, $\exists \rho > 0$ such that
$$
\displaystyle{\sup_{j \geq n} |\theta_{j-n+1}|b_j^k} \leq \rho a_n^m, \quad \forall n \in \mathbb{N}.
$$
Choose $j=n$. Then $\forall k $, $\exists m $, $\exists C>0$ such that
$$
b_n^k \leq C a_n^m                                                      
$$
i.e, $\lambda(A) \subset \lambda(B)$.
Since $T_\theta e_1 \in \lambda(B),$ it follows that $\theta \in \lambda(B)$.

     Conversely, since $B$ is a $G_{1}$-set and by $ii$ and $i$ we have for a given k, there are $m_1(k)$ and $m_2(m_{1})$ such that
\begin{eqnarray*}
\displaystyle \|T_\theta e_n\|_k & = & \sup_{j \geq n } |\theta_{j-n+1}| b_j^k \\
 & \leq &  C_1 \sup_{j \geq n} |\theta_{j-n+1}| (b_j^{m_1})^2 \\
 & \leq & C_1 \sup_{j \geq n} (|\theta_{j-n+1}| b_j^{m_1}) (b_n^{m_1}) \\
 & \leq & C_2 \sup_{j \geq n} (|\theta_{j-n+1}| b_j^{m_1}) (a_n^{m_2}) \\
 & \leq & C_2 \sup_{j \geq n} (|\theta_{j-n+1}| b_{j-n+1}^{m_1}) (a_n^{m_2}) \\
& \leq & C a_n^{m_2}.
\end{eqnarray*}
Therefore, $\forall k$, $\exists m_2$ such that
$$
\sup_n \frac{{\|T_\theta e_n \|}_k}{{\|e_n\|}_{m_2}} < \infty
$$
that is, $T_{\theta}$ is continuous.
\qed
\end{proof}

     We consider the map $T_\theta{'} : \lambda(A) \longrightarrow \lambda(B)$ which is determined by the matrix $C^t$ (the transpose of C) and try to find necessary and sufficient conditions for the continuity of $T_\theta{'}$.
\begin{theorem}
Let $\lambda(A)$ be a nuclear $G_\infty$-space, $\lambda(B)$ be a nuclear K\"{o}the space. Then, $T_\theta{'} : \lambda(A) \longrightarrow \lambda(B)$ which is given above is linear continuous operator if and only if the following hold:

i) $\theta \in \lambda(A)'$

ii) $\lambda(A) \subset \lambda(B)$
\end{theorem}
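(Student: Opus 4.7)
The plan is to follow the structure of the proof of Theorem~2.1 but with the roles of the two K\"othe spaces interchanged, reflecting the transposition from $C$ to $C^t$. First I would compute the action of $T_\theta{'}$ on the canonical basis: since $C^t$ has entry $\theta_{k-n+1}$ in row $n$, column $k$ for $n\le k$ and $0$ otherwise,
$$T_\theta{'} e_k = (\theta_k, \theta_{k-1}, \ldots, \theta_2, \theta_1, 0, 0, \ldots),$$
so, using the sup-norm system available on nuclear K\"othe spaces,
$$\|T_\theta{'} e_k\|_p = \sup_{1\le n\le k} |\theta_{k-n+1}|\, b_n^p \quad \text{and} \quad \|e_k\|_q = a_k^q.$$

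For necessity, I would apply Lemma~\ref{lem1}: for each $p$ there exist $q$ and $\rho>0$ with $\sup_{1\le n\le k}|\theta_{k-n+1}|\,b_n^p \le \rho\, a_k^q$ for every $k$. Taking $n=1$ gives $|\theta_k|\, b_1^p \le \rho\, a_k^q$, which (for $p$ large enough that $b_1^p > 0$) yields $\theta \in \lambda(A)'$. Taking $n=k$ gives $|\theta_1|\, b_k^p \le \rho\, a_k^q$; tacitly assuming $\theta_1 \ne 0$ (exactly as in Theorem~2.1) this yields the inclusion $\lambda(A)\subset \lambda(B)$.

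For sufficiency, the key is to exploit the $G_\infty$-structure of $\lambda(A)$. Fix $p$. The hypothesis $\lambda(A)\subset\lambda(B)$ gives $q_1$ and $C_1$ with $b_n^p \le C_1\, a_n^{q_1}$, and $\theta\in\lambda(A)'$ gives $r$ and $C_0$ with $|\theta_j|\le C_0\, a_j^r$. Setting $s=\max(r,q_1)$, and using that $a_n^s$ is non-decreasing in $n$ together with the bounds $n\le k$ and $k-n+1\le k$, one obtains
$$|\theta_{k-n+1}|\, b_n^p \le C_0 C_1\, a_{k-n+1}^s\, a_n^s \le C_0 C_1\, (a_k^s)^2.$$
The $G_\infty$ condition on $A$ then furnishes $q$ and $C_2$ with $(a_k^s)^2 \le C_2\, a_k^q$, hence $\|T_\theta{'} e_k\|_p \le C\, a_k^q$, and Lemma~\ref{lem1} delivers continuity.

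The main technical point, dual to that of Theorem~2.1, is matching the Cauchy-product structure to the smoothness assumption: the product $|\theta_{k-n+1}|\, b_n^p$ couples indices $n$ and $k-n+1$, both bounded by $k$, so monotonicity of the entries of $A$ lets one dominate each factor by $a_k^s$, and the resulting square is precisely what the $G_\infty$ condition on $\lambda(A)$ is designed to absorb. In Theorem~2.1 the analogous role was played by the $G_1$ condition on $\lambda(B)$, whose decreasing monotonicity and squaring axiom were exactly what was needed to collapse the convolution in the opposite direction.
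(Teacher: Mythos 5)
Your proposal is correct and follows essentially the same route as the paper: the same computation of $\|T_\theta{'}e_k\|_p$, the same specializations $n=1$ and $n=k$ for necessity (including the same tacit assumption $\theta_1\neq 0$), and for sufficiency the same domination of $|\theta_{k-n+1}|$ and $b_n^p$ by entries of $A$, followed by monotonicity in both indices to reach $(a_k^s)^2$ and the $G_\infty$ squaring axiom to absorb it. The only differences are cosmetic (index names, taking the maximum exponent before rather than after the monotonicity step, and your slightly more careful remark about choosing $p$ with $b_1^p>0$).
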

\begin{proof}
The matrix $C^t$ of the operator $T_{\theta}{'} : \lambda(A) \longrightarrow \lambda(B)$ is the following upper triangular matrix:

$$
C^t =
\begin{pmatrix}
\theta_1 & \theta_2 & \theta_3 & \theta_4 & \cdots \\
0 & \theta_1 & \theta_2 & \theta_3 & \cdots \\
0 & 0 & \theta_1 & \theta_2 & \cdots \\
\vdots &  &   & \ddots \\
\end{pmatrix}
$$
Let $T_{\theta}{'}: \lambda(A) \longrightarrow \lambda(B)$ be a continuous linear operator.\\
Note that $ {\|T_{\theta}{'} e_n\|}_k = {\|(\theta_{n}, \theta_{n-1}, \cdots, \theta_{1},0,0,\cdots)\|}_k = \displaystyle{\sup_{1 \leq i \leq n} |\theta_{n+1-i}| b_i^k} \quad$ for $n \in \mathbb{N}.$
So, by Lemma \ref{lem1} $\forall k$, $\exists m$, $\exists \mu > 0$ such that
$$
\displaystyle{\sup_{1 \leq i \leq n} |\theta_{n+1-i}|b_i^k} \leq \mu a_n^m, \quad \forall n \in \mathbb{N}.
$$
Let $i=1$. Hence $\exists m$, $\exists C = \frac{\mu}{b_{1}^{k}} > 0$ such that

$$
|\theta_{n}| \leq C a_{n}^{m}, \quad \forall n
$$
i.e, $\theta \in \lambda(A)'$.\\
Let $i=n$. Then $\forall k$, $\exists m$ such that

$$
b_{n}^{k} \leq \frac{\mu}{|\theta_1|} a_{n}^{m} 
$$
i.e, $$\lambda(A) \subset \lambda(B)$$
On the other hand, since $A$ is a $G_{\infty}$-set and by $(i)$ and $(ii)$ for a given $k$, there are $m_1$ and $m_2(k)$ and $m=max\left\{m_1,m_2\right\}$ such that
\begin{eqnarray*}
\|T_{\theta}{'} e_n\|_k & = & \sup_{1\leq i \leq n} |\theta_{n-i+1}| b_i^k \\
& \leq & C_1 \sup_{1\leq i \leq n} a_{n-i+1}^{m_1} b_i^k \\
& \leq & C_1 \sup_{1\leq i \leq n} a_{n-i+1}^{m_1} a_i^{m_2} \\
& \leq & C_1 a_n^{m_1} a_n^{m_2} \\
& \leq & C_2 {(a_n^m)}^2.
\end{eqnarray*}
Since $\lambda(A)$ is $G_\infty$ - space, for this $m$, $\exists j$ such that
$$
\sup_n \frac{{(a_n^m)}^2}{a_n^j} < \infty
$$
Therefore, $\forall k $, $\exists j$ such that

$$
\sup_n \frac{{\|T_{\theta}{'} e_n\|}_k}{{\|e_n\|}_j} < \infty
$$
that is, $T_{\theta}{'}$ is continuous.
\qed
\end{proof}

    It is known that $\mathcal{S}$ is a normal sequence space if whenever $\left|x_{i}\right| < \left|y_{i}\right|$ and $y=(y_{i}) \in \mathcal{S}$, then $x=(x_{i}) \in \mathcal{S}$ \cite{Kot69}.
\begin{remark}
Now we write $\theta \in \mathcal{S}$ when the Cauchy product map $T_{\theta}: \lambda(A) \longrightarrow \lambda(B)$ above is continuous. If $\theta,\eta \in \mathcal{S}$, $\lambda \in \mathcal{K}$, then clearly $T_{\theta + \eta}$ and $T_{\lambda \theta}$ will be continuous since $T_{\theta}$ and $T_{\eta}$ are continuous. Hence $\mathcal{S}$ is a vector space.  

    Now, let $\left|\theta_{i}\right| < \left|\eta_{i}\right|, \forall i$, $\eta \in \mathcal{S}.$ Since $T_{\eta}$ is continuous, for all $k$ we find $m$ so that
$$
\sup_{n} \left\{\sup_{j \geq n} \left|\theta_{j-n+1}\right| \frac{b_{j}^{k}}{a_{n}^{m}}\right\} \leq \sup_{n} \left\{\sup_{j \geq n} \left|\eta_{j-n+1}\right| \frac{b_{j}^{k}}{a_{n}^{m}}\right\} < \infty
$$
i.e. $T_{\theta}$ is continuous.

		Therefore $\theta \in \mathcal{S}$. Hence we obtain that $\mathcal{S}$ is a normal sequence space.
\end{remark}

\end{document}